\newcommand{\R}{\mbox{\boldmath $R$}}
\def \P {\mbox{\boldmath $P$}}
\newcommand{\T}{\mbox{\boldmath $T$}}
\newcommand{\TP}{\mbox{\boldmath $T\!P$}}
\newcommand{\ga}{\varGamma}
\newcommand{\la}{\Lambda}
\def \phi {\varphi}
\def \Phi {\varPhi}
\newcommand{\wt}{\widetilde}
\newcommand{\Div}{\operatorname{Div}}
\def \div {\operatorname{div}}
\newcommand{\ord}{\operatorname{ord}}
\newcommand{\Prin}{\operatorname{Prin}}
\newcommand{\rank}{\operatorname{rank}}
\newcommand{\Rat}{\operatorname{Rat}}
\newcommand{\supp}{\operatorname{supp}}
\newcommand{\nc}{\operatorname{NC}}
\newcommand{\tconv}{\operatorname{tconv}}
\newcommand{\algdim}{\operatorname{algdim}}
\newcommand{\geomdim}{\operatorname{geomdim}}
\newcommand{\set}[2]{\{ \, {#1} \mid {#2} \, \}}
\newcommand{\sset}[3]{\left\{ \, {#1} \mathrel{} \middle| \mathrel{} \begin{array}{l}{#2} \\ {#3} \end{array} \, \right\}}
\newcommand{\ratg}[2]{{#1}_{\geq {#1}({#2})}}
\newcommand{\q}[1]{\text{``}{#1}\text{''}}
\newtheorem{thm}{Theorem}[section]
\newtheorem{lem}[thm]{Lemma}
\newtheorem{prop}[thm]{Proposition}
\newtheorem{cor}[thm]{Corollary}
\theoremstyle{definition}
\newtheorem{rem}[thm]{Remark}
\title{Divisorial condition for the stable gonality of tropical curves}
\author{Yuki Kageyama}
\date{}
\begin{document}

\maketitle

\begin{abstract}
Let $d$ be a positive integer. There are several versions of $d$-gonality for tropical curves, stable $d$-gonality and divisorial $d$-gonality, which are both inspired by $d$-gonality for compact Riemann surfaces. However, that conditions are not equivalent. We have a condition of divisors equivalent to stable $d$-gonality for tropical curves.
\end{abstract}

\section{Introduction}

A tropical curve is a metric graph with possibly unbounded edges, which is an analogue of an algebraic curve or a compact Riemann surface. Divisors, ranks of divisors, and linear systems on tropical curves have been studied recently by other authors. A Riemann--Roch theorem for graphs was proven by Baker and Norine (\cite{BN07}), and one for tropical curves was established independently by Gathmann and Kerber (\cite{GK08}) and by Mikhalkin and Zharkov (\cite{MZ08}).

Let $d$ be a positive integer. A compact Riemann surface $X$ is $d$-\textit{gonal} if there exists a holomorphic map $X \to \P ^1$ of degree $d$. This condition is equivalent to the existence of a linear system on $X$ of degree $d$ and dimension one. A tropical curve $\ga$ is \textit{stably $d$-gonal} if there exists a finite harmonic morphism from some tropical modification of $\ga$ to some tree. The moduli space of genus $g$ tropical curves of stably $d$-gonal has the classical dimension $\min \{2g+2d-5,3g-3\}$ (\cite{CD18}). If $\ga$ is stably $d$-gonal, then $\ga$ has a linear system of degree $d$ and rank one. However, the converse does not hold.

For a finitely generated linear system $\la$ on $\ga$, we define the \textit{geometrical dimension} of $\la$. We prove the following:

\begin{thm}[$=$Corollary \ref{mthm}]
Let $\ga$ be a tropical curve, and let $d$ be a positive integer. Then, the followings are equivalent:
\begin{enumerate}
	\item[$(1)$] $\ga$ is stably $d$-gonal.
	\item[$(2)$] $\ga$ has a finitely generated liner system of degree $d$, rank one, and geometrical dimension one.
\end{enumerate}
\end{thm}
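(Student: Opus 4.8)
The plan is to prove the two implications separately, treating the fibers of a harmonic morphism as the bridge between the geometric picture (a map to a tree) and the divisorial picture (a linear system). The known implication recorded in the introduction --- that stable $d$-gonality produces a rank-one linear system of degree $d$ --- already gives most of $(1) \Rightarrow (2)$, so the real content is to extract the geometrical dimension from the morphism and, in the converse direction, to rebuild the morphism out of the linear system.

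For the direction $(1) \Rightarrow (2)$, I would start from a finite harmonic morphism $\phi \colon \wt{\ga} \to T$ of degree $d$, where $\wt{\ga}$ is a tropical modification of $\ga$ and $T$ is a tree. The fiber divisors $\phi^{*}(p)$, as $p$ ranges over $T$, are effective of degree $d$ and mutually linearly equivalent, so they span a linear system $\la$. I would check that $\la$ is finitely generated (because $T$ has finitely many edges), that it has rank one (the implication already recorded in the introduction), and, crucially, that its geometrical dimension is one: the assignment $p \mapsto \phi^{*}(p)$ realizes the one-dimensional object $T$ as the parameter space of $\la$ inside the space of effective divisors. Transporting $\la$ back along the modification $\wt{\ga} \to \ga$ then yields the required linear system on $\ga$.

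For the converse $(2) \Rightarrow (1)$, which I expect to be the heart of the matter, I would reconstruct the tree and the morphism from $\la$ itself. Writing $\la$ as generated by finitely many rational functions $f_0, \dots, f_n$, these give a tropical map $\ga \to \TP^{n}$ whose image is $\tconv(f_0, \dots, f_n)$. The hypothesis $\geomdim \la = 1$ forces this image to be one-dimensional, and tropical convexity should force it to be contractible, hence a finite metric tree $T$. The induced map $\ga \to T$ is the candidate morphism, and I would argue that after inserting the appropriate bounded and unbounded edges --- i.e. passing to a tropical modification $\wt{\ga}$ --- it becomes a finite harmonic morphism of degree $d$, the degree being read off from the degree $d$ of the divisors in $\la$.

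The main obstacle is precisely this last construction. Rank one alone produces a map to a tree that can fail to be finite or harmonic --- this is exactly why the converse of the rank-one statement fails --- so the entire weight of the argument rests on using $\geomdim \la = 1$ to guarantee that the target is genuinely one-dimensional and that the local degrees along each edge balance. I expect the delicate points to be verifying harmonicity at the vertices of $\wt{\ga}$ and bounding the number of modifications needed, and it is here that finite generation of $\la$ should intervene to keep the combinatorics finite.
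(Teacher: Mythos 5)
Your outline retraces the paper's route at the level of strategy: for $(1)\Rightarrow(2)$ you push the pulled-back complete linear system of a degree-one divisor on $T$ forward along the retraction, exactly as the paper's $\pi_{\ast}(\wt{\phi}^{\ast}|D|)$ (which the paper also only asserts, so no complaint there), and for $(2)\Rightarrow(1)$ you take the map $\Phi_{\la}:\ga\to\TP^{n}$ and argue the image is a tree. Two small corrections on that step: contractibility is a property of the hull $\tconv(\Phi_{\la}(\ga))$, which is a tree because $\geomdim\la=1$ makes it one-dimensional; one then still needs a separate argument --- comparing tropical line segments with the unique simple paths in the tree --- to conclude that the image \emph{equals} its hull. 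Also, $\Phi_{\la}:\ga\to\Phi_{\la}(\ga)$ is automatically a finite morphism once the image carries its tropical metric; the only thing that can fail is harmonicity, and only at finitely many points. But the step you yourself flag as ``the main obstacle'' is precisely where all the content of the paper lies, and your proposal offers no mechanism for it: saying that one should insert edges so that local degrees balance is a restatement of the goal, not a construction.

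The missing ideas are these. The paper parametrizes $\la$ by the image tree: each $x\in\ga$ determines a divisor $D_{x}\in\la$ via the identification $P(\la)\simeq\tconv(\Phi_{\la}(\ga))$, and the truncation functions $f_{\geq a}$ yield Lemma~\ref{base}: if $D_{y}=D_{x}+\div(f)$, then $D_{z}=D_{x}+\div(f_{\geq f(z)})$ for $\Phi_{\la}(z)$ on the segment between $\Phi_{\la}(x)$ and $\Phi_{\la}(y)$. This gives the exact local-degree formula $\sum_{h\mapsto h'}\deg_{h}(\Phi_{\la})=D_{x}(x)-D_{y}(x)$ (Lemma~\ref{har}), which shows harmonicity fails exactly at the \emph{indeterminacy points} $I(\la)$, i.e.\ points lying in the supports of two distinct divisors of $\la$; these form a finite set (Lemma~\ref{ind}). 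The modification is then not a search bounded by finite generation, as you suggest, but an explicit grafting recipe: at each $p\in I(\la)$ one attaches, for each integer $1\leq n\leq D_{p}(p)$, a copy of the subtree $T_{p,n}=\Phi_{\la}(\set{x\in\ga}{D_{x}(p)\geq n})$ of the target, mapped by the inclusion. The grafted copies contribute exactly $D_{y}(p)$ along each half-edge $h'$, restoring the harmonic count $D_{p}(p)-D_{y}(p)+D_{y}(p)=D_{p}(p)$, and the global degree comes out as $\deg(\wt{\phi})=\deg(D_{x})=d$ by splitting the fiber over a generic point into its trace on $\ga$ and its trace on the grafted trees. Without the divisor family $x\mapsto D_{x}$, the truncation lemma, and the indeterminacy-point analysis, your plan has no handle on harmonicity, so the gap you acknowledged is a genuine missing idea rather than a deferred verification.
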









\section{Preliminaries}

\subsection{Tropically convex sets}

The \textit{tropical semifield} is the set $\T$ with two tropical operations:
	\[ \q{a+b} := \max \{ a,b \} \quad \text{and} \quad \q{a \cdot b} := a+b. \]
The set $\T^{n+1}$ is a semivector space over the tropical semifield. A subset $S$ of $\T^{n}$ is \textit{tropically convex} if the set $S$ contains the point $\q{a \cdot x + b \cdot y}$ for all $x,y \in S$ and all $a,b \in \T$. The \textit{tropical convex hull} of a given subset $V \subset \T^{n+1}$, denoted by $\tconv(V)$ is the smallest tropically convex subset of $\T^{n+1}$ which contains $V$. We identify the tropically convex set $S$ with its image in the $n$-dimensional \textit{tropical projective space} $\TP^{n}$. Tropically convex sets are contractible spaces (see \cite[Theorem 2]{DS04}). A \textit{tropical line segment} between two points $x$ and $y$ in $\TP^{n}$ is the tropical convex hull of $\{x,y\}$. This is the concatenation of at most $n$ ordinary line segments. Further, the slope of each line segment is a zero-one vector (see \cite[Proposition 3]{DS04}).

\subsection{Tropical curves}

A \textit{topological graph} is a topological space obtained by gluing a finite, disjoint union of closed intervals along an equivalence relation on the boundary points. If a topological graph $\ga$ is connected and the intervals from which it is glued are prescribed with a positive length, then $\ga$ becomes a compact metric space with shortest-path metric. Such a metric space is called a \textit{metric graph}. A \textit{tropical curve} is obtained from a metric graph by attaching finitely many intervals $[0,\infty]$ in such a way that each $0 \in [0,\infty]$ gets identified with a point of the metric graph. Each point $\infty \in [0,\infty]$ of a tropical curve is called a \textit{point at infinity}, and the others are \textit{finite points}.

Every point $x$ in a tropical curve has a neighborhood homeomorphic to a finite union of half-open intervals $[0,1)$ glued along $0$. Each of these intervals is called a \textit{half-edge} emanating from $x$, and their number is the \textit{valency} of $x$. We identify two half-edges emanating from $x$ if one is contained in the other.

\subsection{Divisors on tropical curves}

A \textit{divisor} on a tropical curve $\ga$ is an element of the free abelian group generated by the points of $\ga$. The group of divisors on $\ga$ is denoted by $\Div(\ga)$. For a divisor $D$, the coefficient of a point $x$ is denoted by $D(x)$. We define the \textit{degree} of $D$ by $\deg(D):=\sum_{x \in \ga} D(x)$. A divisor $D$ on $\ga$ is called \textit{effective}, and we write $D\geq 0$, if all the coefficients $D(x)$ are non-negative. The \textit{support} of $D$ is defined to be $\supp(D):=\set{x \in \ga}{D(x) \neq 0}$. A \textit{rational function} on $\ga$ is a function $f:\ga \to \R \cup \{\pm \infty \}$ which is continuous piecewise integral affine with a finite number of pieces such that $f$ can take on the values $\pm \infty$ only points at infinity, or $f \equiv -\infty$. The set of rational functions on $\ga$ is denoted by $\Rat(\ga)$. Let $f \in \Rat(\ga)^{\ast}$. The \textit{order} of $f$ at a point $x$ in $\ga$ is the sum of the outgoing slopes of $f$ at $x$, denoted by $\ord_{x}(f)$. The \textit{principal divisor} associated to $f$ is defined by
	\[ \div(f):= \sum_{x \in \ga} \ord_{x}(f) \cdot x. \]
The set of all principal divisors on $\ga$, denoted by $\Prin(\ga)$, becomes a subgroup of $\Div(\ga)$. It is easy to check that the degree of every principal divisor is zero. Two divisors $D$ and $D'$ on $\ga$ are called \textit{linearly equivalent}, and we write $D \sim D'$, if $D-D'$ is a principal divisor. The \textit{complete linear system} of a divisor $D$ on $\ga$, denoted by $|D|$, is the set of all effective divisors linearly equivalent to $D$. We define the subset $L(D)$ of $\Rat(\ga)$ as 
	\[ L(D):=\set{f \in \Rat(\ga)^{\ast}}{D+\div(f)\geq 0}\cup \{ -\infty\}.\]
The subset $L(D)$ becomes a finitely generated as a semivector space (see \cite[Lemma 4 and Corollary 9]{HMY12}). The complete linear system $|D|$ is identified with the projective space $\P L(D)$.


A \textit{linear system} is a subset $\la$ of a complete linear system $|D|$ such that
	\[ L_{\la}(D):=\set{f \in L(D)^{\ast}}{D+\div(f)\in \la} \cup \{ -\infty \} \]
is a subspace of $L(D)$. A linear system $\la$ is \textit{finitely generated} if $L_{\la}(D)$ is a finitely generated as a semivector space. Every finitely generated linear system has a unique minimal generating set of $L_{\la}(D)$ up to scalar multiplication (see \cite[Proposition 8]{HMY12}). Let $\algdim L_{\la}(D)$ denote the number of a minimal generating set of $L_{\la}(D)$, and let $\algdim \la:=\algdim L_{\la}(D)-1$.

A finitely generated linear system $\la$ has a finite polyhedral complex structure, and let $P(\la)$ denote this polyhedral complex. Let $\{f_{0},\ldots,f_{n}\}$ be a minimal generating set of $L_{\la}(D)$. We define a rational map $\Phi_{\la}:\ga\to\TP^{n}$ as $\Phi_{\la}(x):=(f_{0}(x):\cdots:f_{n}(x))$. Then, the polyhedral complex $P(\la)$ is homeomorphic to the tropical convex hull of the image of $\Phi_{\la}$ (see \cite[Theorem 27]{HMY12}). Let $\geomdim \la$ denote the dimension of $P(\la)$.

The \textit{rank} of a linear system $\la$ is defined as follows: If $\la$ is nonempty, then we define
	\[ \rank \la:=\max\sset{d}{\text{For any effective divisor $E$ of degree $d$, there}}{\text{exists $D' \in \la$ such that $D'-E$ is effective}}. \]
Otherwise, $\rank \la:=-1$.

\begin{rem}
For a finitely generated linear system $\la$, we have
	\[ \rank \la \leq \geomdim \la \leq \algdim \la . \]
\end{rem}

\subsection{Harmonic morphisms}

A continuous map $\phi$ from a tropical curve $\ga$ to a tropical curve $\ga'$ is a \textit{morphism} if linear with integral slopes outside a finite number of points. A morphism $\phi$ is \textit{finite} if $\phi^{-1}(\{x'\})$ is a finite set for any point $x'$ of $\ga'$. For a half-edge $h$ of a point $x \in \ga$, let $\deg_{h}(\phi)$ denote the slope of $\phi$ along $h$. A morphism $\phi$ is \textit{harmonic at a point $x \in \ga$} if for a half-edge $h'$ of $\phi(x)$, the integer
	\[ \sum_{\substack{x \in h \\ h \mapsto h'}}\deg_{h}(\phi) \]
is independent of the choice of $h'$. When $\phi$ is harmonic at $x \in \ga$, this sum is denoted by $\deg_{x}(\phi)$. A morphism $\phi$ is \textit{harmonic} if $\phi$ is harmonic at every point $x \in \ga$. A harmonic morphism $\phi$ has a well-defined degree, defined as
	\[ \deg(\phi):=\sum_{\substack{x \in \ga \\ \phi(x)=x'}}\deg_{x}(\phi) \]
for any $x' \in \ga'$.

Let $\phi:\ga \to \ga'$ be a harmonic morphism of tropical curves, and let $f$ and $f'$ be rational functions on $\ga$ and $\ga'$, respectively. The \textit{push-forward} of $f$ is the function $\phi_{\ast}f:\ga' \to \R \cup \{\pm \infty\}$ defined by
	\[ \phi_{\ast}f(x'):=\sum_{\substack{x \in \ga \\ \phi(x)=x'}} \deg_{x}(\phi) \cdot f(x). \]
The \textit{pull-back} of $f'$ is the function $\phi^{\ast}f':\ga \to \R \cup \{\pm \infty\}$ defined by $\phi^{\ast}f':=f' \circ \phi$. One can check that $\phi_{\ast}f$ and $\phi^{\ast}f'$ are again rational functions. We define the \textit{push-forward homomorphism} on divisors $\phi_{\ast}:\Div(\ga) \to \Div(\ga')$ by
	\[ \phi_{\ast}(D):=\sum_{x \in \ga} D(x) \cdot \phi(x). \]
The \textit{pull-back homomorphism} on divisors $\phi^{\ast} : \Div(\ga') \to \Div(\ga)$ is defined to be
	\[ \phi^{\ast} (D'):=\sum_{x \in \ga} \deg_{x}(\phi) \cdot D'(\phi(x)) \cdot x. \]
One can check that $\deg(\phi_{\ast}(D))=\deg(D)$ and $\deg(\phi^{\ast}(D'))=\deg(\phi) \cdot \deg(D')$ for any divisors $D$ on $\ga$ and $D'$ on $\ga'$.

\begin{prop}[cf. {\cite[Proposition 4.2]{BN09}}] \label{pu}
Let $\phi:\ga \to \ga'$ be a harmonic morphism of tropical curves. Then, we have
	\[ \phi_{\ast} (\div(f))=\div(\phi_{\ast} f) \quad \text{and} \quad \phi^{\ast} (\div(f'))=\div(\phi^{\ast} f') \]
for any rational functions $f$ on $\ga$ and $f'$ on $\ga'$.
\end{prop}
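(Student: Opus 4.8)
The plan is to verify each of the two identities coefficient by coefficient, comparing the coefficient at a chosen point of $\ga'$ for the push-forward identity and at a chosen point of $\ga$ for the pull-back identity. For a half-edge $h$ emanating from a point $x$, write $s_h(f)$ for the outgoing slope of $f$ along $h$, so that by definition $\ord_x(f)=\sum_{x \in h} s_h(f)$, and recall that $\deg_h(\phi)$ denotes the slope of $\phi$ along $h$. The single local fact I will use throughout is that if $h$ maps into a half-edge $h'$ of $\phi(x)$, then moving a distance $t$ along $h$ corresponds to moving a distance $\deg_h(\phi)\cdot t$ along $h'$, whereas a half-edge with $\deg_h(\phi)=0$ is collapsed onto the point $\phi(x)$.

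I will treat the pull-back identity first, since harmonicity enters most transparently there. Fixing $x \in \ga$ and writing $x'=\phi(x)$, the local fact gives $s_h(\phi^{\ast} f')=\deg_h(\phi)\cdot s_{h'}(f')$ for the half-edge $h'$ of $x'$ into which $h$ maps, which also covers $\deg_h(\phi)=0$ since then both sides vanish. Summing over all half-edges of $x$ and grouping them by the half-edge $h'$ they map to yields
\[ \ord_x(\phi^{\ast} f')=\sum_{h' \ni x'} s_{h'}(f') \sum_{\substack{x \in h \\ h \mapsto h'}} \deg_h(\phi). \]
By harmonicity the inner sum equals $\deg_x(\phi)$ for every $h'$, so it factors out and leaves $\deg_x(\phi)\cdot \ord_{x'}(f')$, which is exactly the coefficient of $x$ in $\phi^{\ast}(\div(f'))$.

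For the push-forward identity I will fix $x' \in \ga'$ and compute the outgoing slope of $\phi_{\ast}f$ along a half-edge $h'$ of $x'$. For each preimage $x$ of $x'$ and each half-edge $h$ of $x$ with $h \mapsto h'$ and $\deg_h(\phi)>0$, a point at distance $s$ along $h'$ has a preimage on $h$ at distance $s/\deg_h(\phi)$ from $x$ whose local degree is $\deg_h(\phi)$; expanding $\phi_{\ast}f$ to first order in $s$, the contribution $\deg_h(\phi)\cdot f$ of this preimage produces the constant term $\deg_h(\phi)\cdot f(x)$ together with the linear coefficient $s_h(f)$. Harmonicity collapses the constant terms to $\phi_{\ast}f(x')$, confirming consistency at $x'$, and reading off the linear coefficient and summing over the half-edges $h'$ of $x'$ gives
\[ \ord_{x'}(\phi_{\ast}f)=\sum_{\substack{x \in \ga \\ \phi(x)=x'}} \ \sum_{\substack{x \in h \\ \deg_h(\phi)>0}} s_h(f), \]
after using that each half-edge of positive slope maps to a unique $h'$.

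It remains to match this with the coefficient $\sum_{\phi(x)=x'}\ord_x(f)$ of $x'$ in $\phi_{\ast}(\div(f))$, and this comparison is the step I expect to be the main obstacle, because $\phi$ need not be finite, so $\phi^{-1}(x')$ may contain entire contracted subgraphs. I will decompose $\phi^{-1}(x')$ into its isolated points and its one-dimensional connected components $K$ on which $\phi\equiv x'$. At an isolated point every half-edge has positive slope, so $\ord_x(f)$ is already in the desired form. On each contracted component $K$ I will split $\ord_x(f)$ into the half-edges lying inside $K$ and those leaving $K$: the former sum over $x\in K$ to $\deg(\div(f|_K))=0$, since a principal divisor on the metric graph $K$ has degree zero, while the half-edges leaving $K$ are exactly those of positive slope. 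Hence each contracted component contributes only its positive-slope boundary half-edges, the two coefficient expressions agree, and Proposition~\ref{pu} follows once both identities are established.
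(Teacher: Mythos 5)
The paper contains no proof of Proposition \ref{pu} to compare against: the statement carries only a ``cf.''\ reference to \cite[Proposition 4.2]{BN09}, where the analogous identities are proved for harmonic morphisms of finite combinatorial graphs, and the adaptation to tropical curves is left implicit. Your argument supplies exactly the verification that this citation papers over, and it is correct. The pull-back half is the routine part: the chain rule for outgoing slopes along half-edges plus harmonicity at $x$, which is how any treatment would run. The push-forward half is where the content lies, and you isolate the right difficulty: harmonic morphisms in this paper need not be finite, so a fiber $\phi^{-1}(x')$ can contain one-dimensional contracted subgraphs, and the naive half-edge matching between $\ord_{x'}(\phi_{\ast}f)$ and $\sum_{\phi(x)=x'}\ord_{x}(f)$ fails there. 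Your resolution---decompose the fiber into isolated points, where every half-edge necessarily has positive degree, and contracted components $K$, whose internal half-edge contributions vanish because $\deg(\div(f|_{K}))=0$---is precisely what separates the tropical statement from a formal bookkeeping exercise, and it is the case the paper genuinely needs: the retraction $\pi:\wt{\ga}\to\ga$ of a tropical modification is a non-finite harmonic morphism of degree one, and Section 3 pushes linear systems forward along it. Two minor points to tidy in a final write-up: when a contracted component $K$ reaches points at infinity, $f|_{K}$ is a rational function on a tropical curve rather than a metric graph (possibly taking values $\pm\infty$), though the degree-zero fact still holds; and your first-order expansion of $\phi_{\ast}f$ along $h'$ tacitly uses that, for small $s$, the preimages you list exhaust $\phi^{-1}(z')$ and that $\phi$ is linear at each of them---both true because $\phi$ has finitely many linear pieces, but worth a sentence.
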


A \textit{tropical modification} of a tropical curve $\ga$ is a tropical curve $\wt{\ga}$ obtained from $\ga$ by grafting a finite number of trees on to finite points of $\ga$. Then, there exists a natural retraction morphism $\pi :\wt{\ga} \to \ga$ which is the identity on $\ga$ and contracts each tree to the grafted point. The morphism $\pi$ is a (non-finite) harmonic morphism of degree one. A tropical curve $\ga$ is \textit{stably $d$-gonal} if there exist tropical modification $\wt{\ga}$, a tree $T$, and a finite harmonic morphism $\wt{\phi}:\wt{\ga} \to T$ of degree $d$.

\section{Main results}

Let $\ga$ be a tropical curve. In this section, we will consider linear systems on $\ga$ satisfying the following condition:

$(\ast)$ $\la$ is a finitely generated linear system on $\ga$ of degree $d$, $\rank \la=\geomdim \la=1$.

Let $\wt{\ga}$ be a modification of $\ga$, let $\pi:\wt{\ga}\to \ga$ be the retraction morphism, let $\wt{\phi}:\wt{\ga}\to T$ be a finite harmonic morphism to a tree of degree $d$, and let $D$ be a divisor on $T$ of degree one. Then,
	\[ \pi_{\ast}(\wt{\phi}^{\ast}|D|):=\set{\pi_{\ast}(\wt{\phi}^{\ast}E) \in |\pi_{\ast}(\wt{\phi}^{\ast}D)|}{E \in |D|} \]
is a liner system on $\ga$ which satisfies $(\ast)$. Conversely, we will see that a linear system $\la$ on $\ga$ satisfying $(\ast)$ determines a modification $\wt{\ga}$ of $\ga$, a tree $T$, and a finite harmonic morphism $\wt{\phi}:\wt{\ga} \to T$ of degree $d$.

\begin{lem}
Let $\la$ satisfy $(\ast)$. Then, the image of the rational map $\Phi_{\la}$ is a tree, and this coincides with the its tropical convex hull.
\end{lem}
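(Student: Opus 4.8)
The plan is to combine the homeomorphism $P(\la)\cong\tconv(\Phi_{\la}(\ga))$ of \cite[Theorem 27]{HMY12} with the contractibility of tropically convex sets to see that the hull is a tree, and then to prove that the image already fills this tree by a connectedness-plus-extreme-points argument. Throughout write $V:=\Phi_{\la}(\ga)$ and $T':=\tconv(V)$.

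First I would record that $T'$ is a tree. By \cite[Theorem 27]{HMY12} it is homeomorphic to $P(\la)$, so $\dim T'=\geomdim\la=1$; and since $T'$ is tropically convex it is contractible by \cite[Theorem 2]{DS04}. A contractible (hence connected) one-dimensional finite polyhedral complex contains no cycle, so it is a tree. This identifies the target object of the second assertion and settles one half of the first.

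Next I would show that $V$ is connected and contains every leaf of $T'$. After checking that $\Phi_{\la}$ is everywhere defined (at each point at least one generator is not $-\infty$), continuity on the connected space $\ga$ makes $V$ connected. For the leaves, I claim each leaf $\ell$ of $T'$ is an extreme point of $\tconv(V)$ and hence lies in $V$: the inclusion of extreme points in the generating set is the tropical analogue of the classical fact, so it suffices to see that a leaf cannot be written as a nontrivial tropical combination of two distinct points of $T'$. Such a combination lies on the tropical segment between those points, which by \cite[Proposition 3]{DS04} is an embedded (non-backtracking) arc and therefore, inside the tree $T'$, coincides with the geodesic joining its endpoints; since a leaf is never interior to a geodesic, no such representation exists.

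Finally I would conclude. A connected subset of a tree is geodesically convex --- the unique arc between two of its points cannot be avoided without disconnecting the set --- and a finite tree is the union of the geodesics joining its leaves. As $V$ is connected and contains all leaves of $T'$, it contains every leaf-to-leaf geodesic, whence $V=T'$; that is, $\Phi_{\la}(\ga)=\tconv(\Phi_{\la}(\ga))$, and this set is a tree. I expect the main obstacle to be the middle step: pinning down that the extreme points (equivalently, the leaves) of $\tconv(V)$ must already lie in $V$, for which the precise relation between tropical segments and tree geodesics, together with the global definedness of $\Phi_{\la}$ ensuring that $V$ is connected, must be handled with care.
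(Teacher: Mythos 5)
Your proof is correct, and its second half follows a genuinely different route from the paper's. The two proofs open the same way: the hull $\tconv(\Phi_{\la}(\ga))$ is homeomorphic to $P(\la)$, hence a one-dimensional finite polyhedral complex, and it is contractible by \cite[Theorem 2]{DS04}, hence a tree; and $\Phi_{\la}(\ga)$ is a connected subset of it. The paper then closes in one stroke: a tropical line segment between two image points is an embedded arc by \cite[Proposition 3]{DS04}, hence the unique simple path in the tree between them, and a connected subset of a tree contains the unique simple path between any two of its points; so every such segment lies in the image, the image is tropically convex, and therefore equals its hull. You instead prove (writing $V:=\Phi_{\la}(\ga)$ as you do) that every leaf of the hull lies in $V$ --- a point of $\tconv(V)\setminus V$ is a nontrivial tropical combination of two points of the hull, hence interior to a geodesic, which a leaf never is --- and then fill in the whole tree using geodesic convexity of connected subsets together with the fact that a finite tree is the union of its leaf-to-leaf geodesics. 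Your route requires two ingredients the paper never needs: that $\tconv(V)$ is exactly the set of finite tropical combinations of points of $V$ (to force extreme points into $V$, via the reduction to two-point combinations), and finiteness of the tree (so that leaf-to-leaf geodesics exhaust it); the paper's direct verification of tropical convexity is shorter and would apply to any $\R$-tree. What your version buys is the explicit intermediate statement that all leaf-ends of the hull are already image points, which in the paper is only available a posteriori and is the form in which the result gets used later, e.g.\ in Lemma \ref{ind}. Note, finally, a redundancy internal to your own argument: the geodesic-convexity observation, combined with the segment-equals-geodesic fact you already invoke, yields the paper's one-line conclusion that $V$ is tropically convex, so the extreme-point step could have been bypassed entirely.
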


\begin{proof}
Since every tropically convex set is contractible, $\tconv(\Phi_{\la}(\ga))$ is a tree. Since $\Phi_{\la}(\ga)$ is connected, $\Phi_{\la}(\ga)$ is a subtree of $\tconv(\Phi_{\la}(\ga))$. Let $s$ be a tropical line segment between two points of $\Phi_{\la}(\ga)$. This is the unique simple path on $\tconv(\Phi_{\la}(\ga))$ between them, and this must coincide with the unique simple path on $\Phi_{\la}(\ga)$ between them. Therefore, $s$ is contained in $\Phi_{\la}(\ga)$ and thus, $\Phi_{\la}(\ga)$ is tropically convex.
\end{proof}

Let $\la$ satisfy $(\ast)$. Then, $\Phi_{\la}(\ga)$ has a tropical curve structure, where the distance of two points $(f_{0}(x):\cdots:f_{n}(x))$ and $(f_{0}(y):\cdots:f_{n}(y))$ in $\Phi_{\la}(\ga)$ is given by
	\[ \max\set{|f_{i}(x)+f_{j}(y)-f_{j}(x)-f_{i}(y)|}{0 \leq i<j \leq n} \]
(c.f. \cite{DS04}). Further, $\Phi_{\la}:\ga \to \Phi_{\la}(\ga)$ becomes a finite morphism. For each $x \in \ga$, let $D_{x}$ denote the element of $\la$ corresponding to the point $\Phi_{\la}(x) \in \tconv(\Phi_{\la}(\ga))$ under the isomorphism $P(\la) \simeq \tconv(\Phi_{\la}(\ga))$. Let $D \in {\la}$, and let $\{ f_{0},\ldots ,f_{n}\}$ be a minimal generating set of $L_{\la}(D)$. Then, $D_{x}$ is given by
	\[ D_{x}=D+ \div(\q{\sum_{i}\frac{f_{i}}{f_{i}(x)}}) \]
(see the proof of \cite[Theorem 27]{HMY12}).

For a rational function $f$ on $\ga$, and for $a \in \R \cup \{\pm\infty\}$, we define a rational function $f_{\geq a}$ on $\ga$ as
	\[ f_{\geq a}(x):=
	\begin{cases}
		\max \{ f(x),a \} & \text{if $a \in \R \cup \{ - \infty \}$,} \\
		0& \text{if $a=\infty$,}
	\end{cases} \]
for each $x \in \ga$.

\begin{lem} \label{base}
Let $\la$ satisfy $(\ast)$, let $x,y \in \ga$, and let $f \in L_{\la}(D_{x})^{\ast}$ such that $D_{y}=D_{x}+\div(f)$. Then, $f$ has the minimum (resp.\ maximum) value at $y$ (resp.\ $x$). Further, for $z \in \ga$ such that $\Phi_{\la}(z)$ is contained in the tropical line segment between $\Phi_{\la}(x)$ and $\Phi_{\la}(y)$, we have $D_{z}=D_{x}+\div(\ratg{f}{z})$.
\end{lem}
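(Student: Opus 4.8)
The plan is to reduce everything to the generators. Fix $D \in \la$ and a minimal generating set $\{f_{0},\ldots,f_{n}\}$ of $L_{\la}(D)$, and for $w \in \ga$ abbreviate $g_{w}:=\q{\sum_{i}\frac{f_{i}}{f_{i}(w)}}$, which as an ordinary function is $\max_{i}\{f_{i}-f_{i}(w)\}$ and satisfies $D_{w}=D+\div(g_{w})$ by the formula recalled just before the lemma. From $D_{y}=D_{x}+\div(f)$ together with $D_{x}=D+\div(g_{x})$ and $D_{y}=D+\div(g_{y})$ we obtain $\div(g_{y}-g_{x}-f)=0$, so $g_{y}-g_{x}-f$ is constant on the connected curve $\ga$; after adding a constant to $f$ I may assume $f=g_{y}-g_{x}$.

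First I would prove the statement about the extreme values, which needs only the subadditivity of the maximum. Since $g_{x}(x)=0$ one has $f(x)=g_{y}(x)$ and $f(y)=-g_{x}(y)$. For arbitrary $z$, choosing an index that attains $g_{y}(z)$ and splitting $f_{i}(z)-f_{i}(y)=(f_{i}(x)-f_{i}(y))+(f_{i}(z)-f_{i}(x))$ gives $g_{y}(z)\leq g_{y}(x)+g_{x}(z)$, hence $f(z)=g_{y}(z)-g_{x}(z)\leq g_{y}(x)=f(x)$; the symmetric splitting of $f_{i}(z)-f_{i}(x)$ gives $g_{x}(z)\leq g_{x}(y)+g_{y}(z)$, hence $f(z)\geq -g_{x}(y)=f(y)$. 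Thus $f$ attains its maximum at $x$ and its minimum at $y$, and in particular $f(z)\in[f(y),f(x)]$, so the truncation $\ratg{f}{z}=\max\{f,f(z)\}$ is meaningful.

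For the second assertion I would first translate it into an identity of ordinary functions. Using $f=g_{y}-g_{x}$ one has $g_{x}+\ratg{f}{z}=\max\{g_{x}+f,\,g_{x}+f(z)\}=\max\{g_{y},\,g_{x}+f(z)\}$, so applying $\div$ and $D_{x}=D+\div(g_{x})$ shows that the desired equality $D_{z}=D_{x}+\div(\ratg{f}{z})$ is equivalent to $D+\div(g_{z})=D+\div(\max\{g_{y},g_{x}+f(z)\})$, that is, to
\[ g_{z}=\max\{g_{y},\,g_{x}+f(z)\} \]
up to an additive constant. This in turn would follow from the coordinatewise description of the tropical line segment: if $\Phi_{\la}(z)$ lies on the segment between $\Phi_{\la}(x)$ and $\Phi_{\la}(y)$, then its homogeneous coordinates can be written as $f_{i}(z)=\min\{f_{i}(y),\,f_{i}(x)-f(z)\}$, and substituting this into $g_{z}=\max_{i}\{f_{i}-f_{i}(z)\}$ turns it into $\max_{i}\max\{f_{i}-f_{i}(y),\,f_{i}-f_{i}(x)+f(z)\}=\max\{g_{y},g_{x}+f(z)\}$.

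The step I expect to be the main obstacle is establishing this last coordinate description with the correct parameter and normalization. A point of $\tconv\{\Phi_{\la}(x),\Phi_{\la}(y)\}$ is a priori only of the form $f_{i}(z)=\max\{f_{i}(x)+\alpha,\,f_{i}(y)+\beta\}$, and passing from this maximum-description to the minimum-description, checking that no spurious additive constant appears and that the resulting parameter equals precisely $f(z)=g_{y}(z)-g_{x}(z)$, is the delicate point. Here I would use that a tropical line segment is a concatenation of ordinary segments whose slopes are zero-one vectors (\cite[Proposition 3]{DS04}): this staircase structure is exactly what forces the two descriptions to coincide and fixes the parameter. It is convenient to organize the verification by splitting $\ga$ into the locus where $f\geq f(z)$ and the locus where $f\leq f(z)$ --- on the former $g_{z}$ follows $g_{y}$ and on the latter it follows $g_{x}+f(z)$ --- and to invoke the minimality of $\{f_{0},\ldots,f_{n}\}$ to discard redundant generators that do not affect the maximum. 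The remaining bookkeeping is routine once the coordinate description is in place.
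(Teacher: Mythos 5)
Your first half is fine, and it is a genuinely different route from the paper's: by normalizing $f=g_{y}-g_{x}$ (using that a rational function with zero divisor on a connected tropical curve is constant) and running the two triangle inequalities $g_{y}(z)\leq g_{y}(x)+g_{x}(z)$ and $g_{x}(z)\leq g_{x}(y)+g_{y}(z)$, you get the extremal values at $x$ and $y$ cleanly, whereas the paper instead chooses the generating set of $L_{\la}(D_{x})$ so that $\q{\sum_{i}f_{i}/f_{i}(x)}\equiv 0$. Your reformulation of the second assertion as $g_{z}=\max\{g_{y},\,g_{x}+f(z)\}$ up to an additive constant is also a correct and faithful translation of the claim $D_{z}=D_{x}+\div(\ratg{f}{z})$.

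The gap is exactly at the step you flagged, and it is not bookkeeping: the coordinate description $f_{i}(z)=\min\{f_{i}(y),\,f_{i}(x)-f(z)\}$ is false for points on a tropical line segment. With the paper's max-plus conventions, the segment $\tconv\{\Phi_{\la}(x),\Phi_{\la}(y)\}$ consists of the points $w_{i}=\max\{\alpha+f_{i}(x),\,f_{i}(y)\}$; your formula instead parameterizes the min-plus combination of the two endpoints, which for $n\geq 2$ is a different staircase through the opposite corners. Concretely, for $\Phi_{\la}(x)=(0:1:2)$ and $\Phi_{\la}(y)=(0:0:0)$ in $\TP^{2}$, the tropical segment passes through $(0:0:1)$ while your min-path passes through $(0:1:1)$, and neither point lies on the other path; so whenever the segment has an interior breakpoint (the typical case, since the path from $\Phi_{\la}(y)$ to $\Phi_{\la}(x)$ crosses vertices of the tree), your identity fails at the middle generators. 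The zero-one slope structure from \cite[Proposition 3]{DS04} holds for both staircases and cannot distinguish them, and minimality of $\{f_{0},\ldots,f_{n}\}$ does not help, since the failure concerns the actual values $f_{i}(z)$, not redundant generators. What survives is only the inequality $f_{i}(z)\leq\min\{f_{i}(y)+f(z),\,f_{i}(x)\}$ with equality guaranteed at the two extreme indices --- these are precisely the coefficients $a_{0}=b_{0}$ and $a_{n}=b_{n}$ of the maximal representation that the paper computes. To pin down $\Phi_{\la}(z)$ one needs the global input your sketch never uses, namely that $\Phi_{\la}(\ga)$ is a tree (this is where $\geomdim\la=1$ enters): the paper shows the injective path $t\mapsto D_{x}+\div(f_{\geq t})$ realizes the unique simple path, hence the tropical segment, and then identifies $u(f(z))$ with $\Phi_{\la}(z)$ by matching the single invariant $\q{f_{n}/f_{0}}$, using only the two extreme coefficients and the monotonicity relations along the segment. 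Without some substitute for this step, your substitution into $g_{z}=\max_{i}\{f_{i}-f_{i}(z)\}$ cannot be justified, and indeed substituting the correct max-description yields a max of minima rather than $\max\{g_{y},\,g_{x}+f(z)\}$.
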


\begin{proof}
Let $\{ f_{0},\ldots ,f_{n}\}$ be a minimal generating set of $L_{\la}(D_{x})$. We may assume $\q{f_{0}(x)/f_{0}(y)}\leq \cdots \leq \q{f_{n}(x)/f_{n}(y)}$, and $f=\q{\sum_{i}f_{i}/f_{i}(y)}$. Let $g:=\q{\sum_{i}f_{i}/f_{i}(x)}$. Then, $g$ is the constant $0$ since $D_{x}=D_{x}+\div(g)$, and $g(x)=0$. Therefore, we have $0=g(y)=\q{\sum_{i}f_{i}(y)/f_{i}(x)} =\q{f_{0}(y)/f_{0}(x)}$, and $\q{f_{i}(y)/f_{i}(x)} \leq 0$ for each $i$. Then, $f$ has the minimum value at $y$ since $\q{f+g=f}$. Similarly, $f$ has the maximum value at $x$. We define a continuous map $u:[f(y),f(x)] \to \Phi_{\la}(\ga)$ as follows: For $t \in [f(y),f(x)]$, let $u(t)$ be the point of $\tconv(\Phi_{\la}(\ga))=\Phi_{\la}(\ga)$ corresponds to the element $D_{x}+\div(f_{\geq t}) \in \la$. Note that we have $u(f(y))=\Phi_{\la}(y)$ and $u(f(x))=\Phi_{\la}(x)$. Since $u$ is injective, the image of $u$ is a simple path between $\Phi_{\la}(y)$ and $\Phi_{\la}(x)$. This path coincides with the tropical line segment of them. We will show $u(f(z))=\Phi_{\la}(z)$.

Let $\q{\sum_{i}a_{i}f_{i}}$ be the maximal representation of $\ratg{f}{z}$. Then, we have $u(f(z))=(\q{0/a_{0}}:\cdots :\q{0/a_{n}})$. To show $u(f(z))=\Phi_{\la}(z)$, we only to show $\q{f_{n}(z)/f_{0}(z)}=\q{a_{0}/a_{n}}$ because both of $u(f(z))$ and $\Phi_{\la}(z)$ are points of the tropical line segment. We have
	\[ \ratg{f}{z}=\q{f+f(z)\cdot g}=\q{\sum_{i}(\frac{0}{f_{i}(y)}+\frac{f(z)}{f_{i}(x)})f_{i}}. \]
Let $b_{i}:=\q{0/f_{i}(y)+f(z)/f_{i}(x)}$. Since $\q{f_{0}(x)/f_{0}(y)}=f(y) \leq f(z)$, we have $b_{0}=\q{f(z)/f_{0}(x)}$. Then, we have $\ratg{f}{z}(y)=f(z)=\q{b_{0}\cdot f_{0}(y)}$. This implies $a_{0}=b_{0}$. Similarly, since $\q{f_{n}(x)/f_{n}(y)}=f(x) \geq f(z)$, we have $b_{n}=\q{0/f_{n}(y)}$. Then, we have $\ratg{f}{z}(x)=f(x)=\q{b_{n}\cdot f_{n}(x)}$. This implies $a_{n}=b_{n}$. For $0 \leq i<i \leq n$, we have
	\[ \q{f_{i}(z)/f_{i}(y)} \leq \q{f_{j}(z)/f_{j}(y)} \quad \text{and} \quad \q{f_{i}(x)/f_{i}(z)} \leq \q{f_{j}(x)/f_{j}(z)} \]
since $\Phi_{\la}(z)$ is a point of the tropical line segment.
Then, we have
	\[ f(z)=\q{\sum_{i}(f_{i}(z)/f_{i}(y))}=\q{f_{n}(z)/f_{n}(y)} \]
and
	\[ 0=g(z)=\q{\sum_{i}(f_{i}(z)/f_{i}(x))}=\q{f_{0}(z)/f_{0}(x)}. \]
Therefore, we have
	\[ \q{a_{0}/a_{n}}=\q{(f(z)/f_{0}(x))/(0/f_{n}(y))}=\q{f_{n}(z)/f_{0}(z)}. \qedhere\]
\end{proof}


Let $\la$ satisfy $(\ast)$. A point $p \in \ga$ is an \textit{indeterminacy point} of $\la$ if there exist two distinct elements $E,E' \in \la$ such that $p \in \supp(E) \cap \supp(E')$. The set of indeterminacy points is denoted by $I(\la)$.


\begin{lem} \label{zero}
Let $\la$ satisfy $(\ast)$, and let $x \in \ga$. We have $D_{x}(x) \geq D_{y}(x)$ for any $y \in \ga$.
\end{lem}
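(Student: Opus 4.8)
The plan is to reduce the statement to a purely local property of the order function at a global maximum, and then read off the inequality from Lemma \ref{base}, which has already been established.

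First I would fix an arbitrary $y \in \ga$ and produce a rational function comparing $D_x$ and $D_y$. Since $D_x$ and $D_y$ both lie in $\la \subseteq |D|$, they are linearly equivalent, so there exists $f \in \Rat(\ga)^{\ast}$ with $D_y = D_x + \div(f)$. Because every element of $\la$ is effective, $D_y = D_x + \div(f) \geq 0$, so $f \in L(D_x)^{\ast}$, and moreover $D_x + \div(f) = D_y \in \la$; hence $f \in L_{\la}(D_x)^{\ast}$ and Lemma \ref{base} applies to this pair $(x,y)$ and this $f$.

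Next I would invoke the conclusion of Lemma \ref{base} that $f$ attains its maximum value at $x$. The key observation is then that at a point where $f$ is maximal, $f$ cannot increase along any half-edge emanating from $x$, so every outgoing slope of $f$ at $x$ is nonpositive. Since $\ord_{x}(f)$ is by definition the sum of these outgoing slopes, we obtain $\ord_{x}(f) \leq 0$.

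Finally, comparing the coefficients of the point $x$ on both sides of $D_y = D_x + \div(f)$ yields $D_y(x) = D_x(x) + \ord_{x}(f) \leq D_x(x)$, which is exactly the asserted inequality; as $y$ was arbitrary, the proof is complete. I do not expect a genuine obstacle here: once Lemma \ref{base} supplies the maximality of $f$ at $x$, the argument is immediate, and the only step demanding a moment's care is checking $f \in L_{\la}(D_x)^{\ast}$, which rests solely on the effectivity of divisors in $\la$.
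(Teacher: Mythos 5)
Your proof is correct and takes essentially the same route as the paper: both apply Lemma \ref{base} to conclude that $f$ attains its maximum at $x$, deduce $\div(f)(x)=\ord_{x}(f)\leq 0$ from the nonpositivity of the outgoing slopes there, and compare coefficients at $x$ in $D_{y}=D_{x}+\div(f)$. The only difference is that you explicitly verify the existence of $f \in L_{\la}(D_{x})^{\ast}$ from effectivity and membership in $\la$, a step the paper leaves implicit.
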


\begin{proof}
Let $f \in L_{\la}(D_{x})^{\ast}$ such that $D_{y}=D_{x}+\div(f)$. Then, $\div(f)(x) \leq 0$ since $f$ has the maximal value at $x$. Therefore, we have $D_{y}(x)=D_{x}(x)+\div(f)(x) \leq D_{x}(x)$.
\end{proof}

\begin{lem} \label{ind}
$I(\la)$ is a finite set.
\end{lem}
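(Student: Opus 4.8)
The plan is to show that a point $p$ lying in the relative interior of an edge of $\ga$, away from a fixed finite set, belongs to the support of at most one element of $\la$; then $I(\la)$ is contained in that finite set. First I would fix the data making everything piecewise affine. Since $\la$ is finitely generated, $P(\la)\simeq\Phi_{\la}(\ga)=:T'$ is a finite tree with finitely many vertices. Fix one $D\in\la$ and a minimal generating set $\{f_{0},\ldots,f_{n}\}$ of $L_{\la}(D)$, so that $D_{x}=D+\div(\q{\sum_{i}f_{i}/f_{i}(x)})$. Because $\Phi_{\la}$ is a finite morphism and the $f_{i}$ are piecewise affine, there is a finite set $V\subset\ga$ containing $\supp(D)$, the preimages of the vertices of $T'$, and all non-linearity points of the $f_{i}$ and of $\Phi_{\la}$, refined so that on every connected component (open edge) $\eta$ of $\ga\setminus V$ each $f_{i}$ is affine with integral slope $m_{i}$ and $\Phi_{\la}$ maps $\eta$ injectively into the relative interior of a single edge $\varepsilon$ of $T'$. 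It then suffices to prove $I(\la)\subseteq V$.

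Fix such a $p\in\eta$ and write $E_{t}$ for the element of $\la$ attached to $t\in T'$, so $E_{\Phi_{\la}(x)}=D_{x}$. The goal is to show that $S_{p}:=\set{t\in T'}{E_{t}(p)\geq 1}$ is at most the single point $\Phi_{\la}(p)$; by Lemma \ref{zero} any $t\in S_{p}$ with $t\neq\Phi_{\la}(p)$ would, together with $\Phi_{\la}(p)$, exhibit $p$ as an indeterminacy point, so this is exactly what must be excluded. The crux is a constancy statement: given $y\in\ga$ and $f\in L_{\la}(D_{p})^{\ast}$ with $D_{y}=D_{p}+\div(f)$, Lemma \ref{base} says $f$ attains its maximum at $p$ and that the open geodesic from $\Phi_{\la}(p)$ to $\Phi_{\la}(y)$ is swept out by the divisors $D_{p}+\div(f_{\geq s})$. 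For every $s<f(p)$ continuity gives a neighbourhood of $p$ on which $f>s$, hence on which $f_{\geq s}=f$; therefore $\ord_{p}(f_{\geq s})=\ord_{p}(f)$ is independent of $s$, and $E_{t}(p)=D_{p}(p)+\ord_{p}(f)=D_{y}(p)$ for every interior point $t$ of that geodesic.

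It remains to evaluate this constant. For a point $x_{1}\in\eta$ one computes directly that $\q{\sum_{i}f_{i}/f_{i}(x_{1})}$ restricted to $\eta$ equals $\max_{i}m_{i}(u-x_{1})$ in the edge coordinate $u$, which is affine on a neighbourhood of $p$ (the maximum being attained there by a single slope); hence $\ord_{p}=0$ and $D_{x_{1}}(p)=D(p)=0$. Now $\Phi_{\la}(p)$ is interior to the edge $\varepsilon$, so it has exactly two directions in $T'$, and this is precisely where the hypothesis $\geomdim\la=1$ enters: the two geometric ends of $\eta$ at $p$ realise these two directions through adjacent points $x_{1}$. Given any $y$ with $\Phi_{\la}(y)\neq\Phi_{\la}(p)$, choosing $x_{1}$ close enough to $p$ on the appropriate side puts $\Phi_{\la}(x_{1})$ strictly inside the geodesic $[\Phi_{\la}(p),\Phi_{\la}(y)]$; combining the constancy statement with $D_{x_{1}}(p)=0$ forces $D_{y}(p)=0$. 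Thus $E_{t}(p)=0$ for all $t\neq\Phi_{\la}(p)$, so $p\notin I(\la)$ and $I(\la)\subseteq V$ is finite.

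I expect the main obstacle to be organisational rather than conceptual: pinning down the finite set $V$ so that $\Phi_{\la}$ is genuinely injective and affine on each open edge into a single open edge of $T'$, so that each interior point of $\ga$ is two-valent and maps to a two-valent point of $T'$, and checking that the local computation of $\ord_{p}$ is valid on a full two-sided neighbourhood of $p$. The conceptual content is entirely in the constancy computation together with the observation that $\geomdim\la=1$ forces only two directions at $\Phi_{\la}(p)$, which is what lets the two one-sided computations cover all of $T'$.
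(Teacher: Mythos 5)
Your proof is correct, but it follows a genuinely different global strategy from the paper's, even though both rest on the same local mechanism: since $f$ attains its maximum at $p$ (Lemma \ref{base}), for every $s<f(p)$ one has $f_{\geq s}=f$ on a neighborhood of $p$, hence $\ord_{p}(f_{\geq s})=\ord_{p}(f)$, so the coefficient at $p$ is constant along the half-open geodesic from $\Phi_{\la}(p)$ to $\Phi_{\la}(y)$. The paper uses this to transport \emph{positivity} outward: given $p\in I(\la)$ with a witness $D_{x}(p)\geq 1$, it chooses $y$ so that $\Phi_{\la}(y)$ is a leaf-end of the tree beyond $\Phi_{\la}(x)$ and concludes $D_{y}(p)=D_{x}(p)\geq 1$, whence $I(\la)$ lies in the union of the supports of the finitely many leaf-end divisors --- a short argument needing no edge decomposition and no generic-point computation. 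You instead transport \emph{vanishing} inward-out: the direct computation that $\q{\sum_{i}f_{i}/f_{i}(x_{1})}$ restricts on $\eta$ to $\max_{i}m_{i}(u-x_{1})$, which is affine at $p\neq x_{1}$, gives $D_{x_{1}}(p)=0$ for nearby $x_{1}$, and the constancy statement propagates this to $D_{y}(p)=0$ for every $y$ with $\Phi_{\la}(y)\neq\Phi_{\la}(p)$, so a generic $p$ lies in the support of at most one element of $\la$ and $I(\la)\subseteq V$. The bookkeeping you flag is real but routine and available from the paper: finiteness of the morphism $\Phi_{\la}$ (asserted after the first lemma) gives nonconstancy, hence injectivity, of $\Phi_{\la}$ on each open edge, and you should explicitly add the finitely many branch points and points at infinity of $\ga$ to $V$ so that every $p\in\ga\setminus V$ is two-valent; your attribution of the two-direction property to $\geomdim\la=1$ is slightly loose, since what is actually used is that $\Phi_{\la}(p)$ avoids the vertices of the finite polyhedral structure on the tree $\Phi_{\la}(\ga)$. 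What your route buys is an explicit finite superset $V\supseteq I(\la)$ read off from $\supp(D)$, the vertex preimages, and the non-linearity loci of the $f_{i}$; what the paper's route buys is brevity, bounding $I(\la)$ by the supports of finitely many distinguished divisors without ever fixing a piecewise-affine structure.
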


\begin{proof}
Let $p \in \ga$ be an indeterminacy point. We will show that $p$ is contained in $\supp (D_{y})$ for some $y \in \ga$ such that $\Phi_{\la}(y)$ is a leaf-end of $\Phi_{\la}(\ga)$. There exists $x \in \ga$ such that $D_{x}(p) \geq 1$ and $D_{x} \neq D_{p}$. Since $\Phi_{\la}(\ga)$ is a tree, there exists $y \in \ga$ such that $\Phi_{\la}(y)$ is a leaf-end of $\Phi_{\la}(\ga)$, and $\Phi_{\la}(x)$ is contained in the simple path between $\Phi_{\la}(p)$ and $\Phi_{\la}(y)$. Let $f \in L_{\la}(D_{p})^{\ast}$ such that $D_{y}=D_{p}+\div(f)$. By Lemma \ref{base}, we have $D_{x}=D_{p}+\div(\ratg{f}{x})$. Since $D_{p} \neq D_{x}$, we have $f(x)<f(p)$. Therefore, $\div(\ratg{f}{x})(p)=\div(f)(p)$, and thus $D_{y}(p)=D_{p}(p)+\div(f)(p)=D_{p}(p)+\div(\ratg{f}{x})(p)=D_{x}(p) \geq 1$. Since there are finitely many leaf-ends of $\Phi_{\la}(\ga)$, the set of indeterminacy points is finite.
\end{proof}

Let $f$ be a rational function on a tropical curve $\ga$. We define the ``non-constant locus'' of $f$, denoted by $\nc(f)$, as the set of all points $x$ of $\ga$ such that $f|_{U_{x}}$ is non-constant for any neighborhood $U_{x}$ of $x$.

\begin{lem} \label{ratg}
Let $\la$ satisfy $(\ast)$, let $x,y,z \in \ga$, and let $f \in L_{\la}(D_{x})^{\ast}$ such that $D_{y}=D_{x}+\div(f)$. If $z \in \nc(f)$, then we have $D_{z}=D_{x}+\div(\ratg{f}{z})$.
\end{lem}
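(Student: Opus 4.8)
The plan is to reduce to Lemma~\ref{base}. That lemma already gives the identical conclusion $D_{z}=D_{x}+\div(\ratg{f}{z})$ under the hypothesis that $\Phi_{\la}(z)$ lies on the tropical line segment between $\Phi_{\la}(x)$ and $\Phi_{\la}(y)$. Hence it suffices to show that $z\in\nc(f)$ forces $\Phi_{\la}(z)$ onto that segment; I will prove the contrapositive, that if $\Phi_{\la}(z)$ is off the segment then $f$ is constant on a neighborhood of $z$, so that $z\notin\nc(f)$.

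First I would push the problem onto the tree $\Phi_{\la}(\ga)$. Fix a minimal generating set $\{f_{0},\ldots,f_{n}\}$ of $L_{\la}(D_{x})$ ordered as in the proof of Lemma~\ref{base}. Replacing $f$ by an additive constant (which changes neither $\nc(f)$ nor $\div(\ratg{f}{z})$) we may assume $f=\q{\sum_{i}f_{i}/f_{i}(y)}$, and then $g:=\q{\sum_{i}f_{i}/f_{i}(x)}\equiv 0$ as in that proof. Define $F\colon\TP^{n}\to\R$ by $F(\xi):=\max_{i}\{\xi_{i}-f_{i}(y)\}-\max_{i}\{\xi_{i}-f_{i}(x)\}$; this is well defined on $\TP^{n}$. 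Since $g\equiv 0$, evaluating at $\xi=\Phi_{\la}(w)$ gives $F(\Phi_{\la}(w))=f(w)$, i.e.\ $f=F\circ\Phi_{\la}$. As $\Phi_{\la}$ is continuous (indeed a finite morphism), $f$ is constant near $z$ whenever $F$ is constant near $\Phi_{\la}(z)$. So it is enough to show that $F$ is constant on each connected component of $\Phi_{\la}(\ga)$ minus the segment.

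Write $a=\Phi_{\la}(x)$, $b=\Phi_{\la}(y)$ and let $\xi=\Phi_{\la}(z)$ lie off the segment $[a,b]$. Because $\Phi_{\la}(\ga)$ is a tree equal to its own tropical convex hull, the tropical segments $[a,\xi]$, $[b,\xi]$, $[a,b]$ form a tripod whose center is the point $q$ of $[a,b]$ nearest $\xi$; in particular $[q,\xi]\subseteq[a,\xi]\cap[b,\xi]$. The crux is the pair of sandwiching bounds
\[ F\geq F(\xi)\ \text{along}\ [a,\xi] \qquad\text{and}\qquad F\leq F(\xi)\ \text{along}\ [b,\xi]. \]
Granting these, both hold on $[q,\xi]$, so $F\equiv F(\xi)$ there and in particular $F(q)=F(\xi)$. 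Since the same argument applies to every point of the component (all having the same nearest point $q$ on $[a,b]$), $F$ is constant, equal to $F(q)$, on the whole component, as required.

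The main obstacle is proving the two bounds, and this is where the normalizations are used. I would parametrize the segments by the Develin--Sturmfels description, tracing $[a,\xi]$ by $\sigma(\nu)_{i}=\max\{\nu+f_{i}(x),\xi_{i}\}$ and $[b,\xi]$ by $\rho(\mu)_{i}=\max\{\mu+f_{i}(y),\xi_{i}\}$. Substituting into $F$ and simplifying with $g\equiv 0$ (which yields $\max_{i}\{f_{i}(y)-f_{i}(x)\}=0$ and $\max_{i}\{\xi_{i}-f_{i}(x)\}=0$), together with $f(y)=0$ and $f(x)=\max_{\Phi_{\la}(\ga)}F$, the expressions collapse to $F(\sigma(\nu))=\max\{\nu+f(x),F(\xi)\}-\max\{\nu,0\}\geq F(\xi)$ and $F(\rho(\mu))=\max\{\mu,F(\xi)\}-\max\{\mu,0\}\leq F(\xi)$, which are exactly the claimed inequalities. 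With the tree statement established, $z\in\nc(f)$ forces $\Phi_{\la}(z)\in[a,b]$, and Lemma~\ref{base} delivers $D_{z}=D_{x}+\div(\ratg{f}{z})$.
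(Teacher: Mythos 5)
Your proposal is correct, but it takes a genuinely different route from the paper's proof. The paper argues locally and generically: since both sides of the claimed identity vary continuously in $z$, it suffices to treat $z\in\nc(f)$ outside the finite set $I(\la)\cup\supp(D_{x})$ (Lemma \ref{ind}); for such $z$ one checks $f(z)<f(x)$ and $\ord_{z}(f)=D_{y}(z)\geq 0$, so non-constancy of $f$ at $z$ yields a positive outgoing slope, hence $z$ lies in the support of $D_{x}+\div(\ratg{f}{z})\in\la$ and also in that of $D_{z}$ (Lemma \ref{zero}), and $z\notin I(\la)$ then forces these two elements of $\la$ to coincide. You instead prove the stronger geometric statement that $f=F\circ\Phi_{\la}$ is locally constant off $\Phi_{\la}^{-1}([a,b])$, where $[a,b]$ is the tropical segment from $\Phi_{\la}(x)$ to $\Phi_{\la}(y)$, so that $\nc(f)\subseteq\Phi_{\la}^{-1}([a,b])$ and Lemma \ref{base} finishes. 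Your computation is sound: the factorization $f=F\circ\Phi_{\la}$ follows from $g\equiv 0$, the tripod/median argument is legitimate because the image equals its tropical convex hull (so tropical segments are the unique tree paths), and both collapsed formulas for $F(\sigma(\nu))$ and $F(\rho(\mu))$ check out. One point you should make explicit: the inequality $\max\{\mu,F(\xi)\}-\max\{\mu,0\}\leq F(\xi)$ requires $F(\xi)\geq 0$, which comes from the minimum-at-$y$ assertion of Lemma \ref{base} (the normalization $f(y)=0$ by itself does not suffice). Comparing the two approaches: the paper's proof is shorter and recycles the indeterminacy-point machinery already developed, whereas yours bypasses the continuity/genericity reduction and $I(\la)$ altogether, and it directly establishes the geometric fact --- points where $f$ is non-constant map into the segment --- which the paper obtains only afterwards, as a consequence of this lemma, inside the proof of Lemma \ref{har}. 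Both treatments gloss over points at infinity to the same degree.
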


\begin{proof}
By the continuities of $D_{z}$ and $D_{x}+\div(\ratg{f}{z})$, it is enough to show $D_{z}=D_{x}+\div(\ratg{f}{z})$ for almost all $z \in \nc(f)$. We may assume $z \notin I(\la) \cup \supp(D_{x})$. If $f(z)=f(x)$, then $\ord_{z}(f)<0$. This implies $z \in \supp(D_{x})$. Thus, we have $f(z)<f(x)$. Since $z \notin \supp(D_{x})$, we have $\ord_{z}(f)=D_{x}(z)+\div(f)(z)=D_{y}(z) \geq 0$. Since $f$ is non-constant around $z$, some of the outgoing slopes of $f$ at $z$ are positive. Thus, the support of $D_{x}+\div(\ratg{f}{z})$ has $z$. Since $z \notin I(\la)$, we have $D_{z}=D_{x}+\div(\ratg{f}{z})$.
\end{proof}

\begin{lem} \label{har}
Let $\la$ satisfy $(\ast)$, let $x \in \ga$, and let $h'$ be a half-edge of $\Phi_{\la}(x)$. Then, we have
	\[ \sum_{\substack{x \in h \\ h \mapsto h'}}\deg_{h}(\Phi_{\la})=D_{x}(x)-D_{y}(x), \]
where $y$ is a point of $\ga$ such that $\Phi_{\la}(x) \neq \Phi_{\la}(y)$ and $\Phi_{\la}(y)$ is contained in $h'$.
\end{lem}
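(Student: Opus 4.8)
The plan is to compute both sides of the claimed equality by relating the local structure of the map $\Phi_{\la}$ near $x$ to the divisors $D_x$ and $D_y$. The half-edge $h'$ of $\Phi_{\la}(x)$ determines, up to the choice of a point $y$ on it, a direction in the tree $\Phi_{\la}(\ga)$; I would fix such a $y$ with $\Phi_{\la}(y) \ne \Phi_{\la}(x)$ lying on $h'$, and let $f \in L_{\la}(D_x)^{\ast}$ be the function with $D_y = D_x + \div(f)$. By Lemma \ref{base}, $f$ attains its maximum at $x$, so the tropical line segment from $\Phi_{\la}(x)$ toward $\Phi_{\la}(y)$ is traced out by the truncations $f_{\geq t}$ as $t$ decreases from $f(x)$; in particular the half-edges $h$ at $x$ that map to $h'$ are exactly those along which $f$ is (strictly) decreasing as we leave $x$, i.e.\ the directions in which $\Phi_{\la}$ moves into $h'$.

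First I would identify the slope $\deg_h(\Phi_{\la})$ along such a half-edge $h$ with the outgoing slope of $f$ at $x$ along $h$, \emph{up to sign}. This should follow from the distance formula defining the tropical curve structure on $\Phi_{\la}(\ga)$: moving a small distance along $h$ changes $\Phi_{\la}$ by moving a proportional distance along $h'$, and by the description $D_z = D_x + \div(f_{\geq f(z)})$ from Lemma \ref{base} the parameter along $h'$ is governed precisely by the value of $f$. Hence $\sum_{x \in h,\, h \mapsto h'} \deg_h(\Phi_{\la})$ equals the sum of the outgoing slopes of $f$ at $x$ taken over exactly those half-edges along which $f$ decreases, which is $-\sum_{h:\, \text{slope}<0}(\text{slope of }f \text{ along }h)$.

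Next I would convert this into divisor coefficients. Since $f$ has its maximum at $x$, every outgoing slope of $f$ at $x$ is $\leq 0$, so $\ord_x(f) = \sum_h (\text{outgoing slope along }h)$ is exactly the negative of the quantity above; therefore $\sum_{x \in h,\, h \mapsto h'}\deg_h(\Phi_{\la}) = -\ord_x(f) = -\div(f)(x)$. Finally, $D_y(x) = D_x(x) + \div(f)(x)$ gives $\div(f)(x) = D_y(x) - D_x(x)$, and substituting yields $\sum_{x \in h,\, h \mapsto h'}\deg_h(\Phi_{\la}) = D_x(x) - D_y(x)$, as claimed. I would also check that the answer is independent of the particular $y$ chosen on $h'$: any two such choices give functions differing by a truncation, and Lemma \ref{base} together with Lemma \ref{zero} ensures the relevant coefficient $D_y(x)$ stabilizes once $\Phi_{\la}(y)$ has moved off $\Phi_{\la}(x)$ into $h'$.

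The main obstacle I anticipate is the first step: making precise that the slope of $\Phi_{\la}$ along a half-edge $h$ coincides with the absolute value of the outgoing slope of $f$ along $h$, given that $\Phi_{\la}(\ga)$ carries the metric from the $\max$-of-differences distance formula rather than an obviously Euclidean one. I would handle this by working locally near $x$ where, after reordering the $f_i$ as in the proof of Lemma \ref{base}, one coordinate difference realizes the maximum and the tropical line segment is a genuine ordinary segment, so the distance reduces to $|f(z) - f(x)|$ and the slope identification becomes a direct computation. Handling points $x$ of higher valency, where several half-edges may map to the same $h'$, is exactly why the sum appears, and the harmonicity established implicitly here is what guarantees the total is well-defined.
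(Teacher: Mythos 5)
Your overall strategy is the one the paper uses: fix $f \in L_{\la}(D_{x})^{\ast}$ with $D_{y}=D_{x}+\div(f)$, identify $\deg_{h}(\Phi_{\la})$ with the negative of the outgoing slope of $f$ along $h$, and conclude via $\sum_{h \mapsto h'}\deg_{h}(\Phi_{\la})=-\ord_{x}(f)=D_{x}(x)-D_{y}(x)$. But there is a genuine gap at the step you state as ``the half-edges $h$ at $x$ that map to $h'$ are exactly those along which $f$ is (strictly) decreasing,'' which you justify only by the truncation picture of Lemma \ref{base}. Lemma \ref{base} describes $D_{z}$ only for points $z$ that are \emph{already known} to satisfy that $\Phi_{\la}(z)$ lies on the tropical line segment between $\Phi_{\la}(x)$ and $\Phi_{\la}(y)$. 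It therefore gives you one inclusion: if $h \mapsto h'$, then points of $h$ near $x$ map into that segment, and $f$ must strictly decrease along $h$ (otherwise $\ratg{f}{z}$ would be constant and $D_{z}=D_{x}$). It does not give the converse: a half-edge along which $f$ strictly decreases could, for all Lemma \ref{base} says, be mapped by $\Phi_{\la}$ into a different half-edge $h''$ at $\Phi_{\la}(x)$, since nothing in that lemma locates $\Phi_{\la}(z)$ in the tree when membership in the segment is not assumed in advance.

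The converse is exactly what the paper's Lemma \ref{ratg} supplies: for $z$ in the non-constant locus $\nc(f)$, one has $D_{z}=D_{x}+\div(\ratg{f}{z})$, hence $\Phi_{\la}(z)$ lies on the segment and $h \mapsto h'$. Its proof is not formal: it uses the rank-one hypothesis, the finiteness of the indeterminacy set $I(\la)$ (Lemma \ref{ind}), and a genericity-plus-continuity argument, none of which appear in your proposal. Without this converse, your computation only yields the inequality $\sum_{h\mapsto h'}\deg_{h}(\Phi_{\la}) \leq -\ord_{x}(f) = D_{x}(x)-D_{y}(x)$: if some half-edge with strictly negative slope mapped to a germ other than $h'$, its slope would still be counted in $-\ord_{x}(f)$ but not on the left-hand side, and your appeal to ``all outgoing slopes are $\leq 0$'' cannot rule this out. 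So you must invoke (or reprove) Lemma \ref{ratg} at this point. The rest of your argument --- the slope identification via the metric on $\Phi_{\la}(\ga)$, the use of the maximum of $f$ at $x$, and the check that the answer is independent of the choice of $y$ --- is sound and parallels the paper.
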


\begin{proof}
Let $f \in L_{\la}(D_{x})^{\ast}$ such that $D_{x}+\div(f)=D_{y}$, and let $h$ be a half-edge of $x$. We may assume $f|_{h}$ is integral affine. Let us show that $h \mapsto h'$ if and only if the slope of $f|_{h}$ is non-zero. Suppose $h \mapsto h'$. Let $y'$ be a point contained in $h$ such that $\Phi_{\la}(y')=\Phi_{\la}(y)$. Since $D_{x} \neq D_{y}=D_{y'}$, we have $f(x)>f(y)=f(y')$. Therefore, the slope of $f|_{h}$ is non-zero. Conversely, suppose the slope of $f|_{h}$ is non-zero. For each point $z$ contained in $h$, we have $D_{z}=D_{x}+\div(\ratg{f}{z})$ by Lemma \ref{ratg}. Therefore, $\Phi_{\la}(z)$ is contained in the tropical line segment between $\Phi_{\la}(x)$ and $\Phi_{\la}(y)$. This implies $h \mapsto h'$.

For each half-edge $h$ of $x$ such that $h \mapsto h'$, the outgoing slope of $f|_{h}$ is equal to $-\deg_{h}(\Phi_{\la})$. Therefore, we have
	\[ \sum_{\substack{x \in h \\ h \mapsto h'}}\deg_{h}(\Phi_{\la})=-\ord_{x}(f)=D_{x}(x)-D_{y}(x).\qedhere \]
\end{proof}

By Lemma \ref{har}, $\Phi_{\la}$ is harmonic at each point except for indeterminacy points. Let us construct a modification $\wt{\ga}$ of $\ga$ and a harmonic map $\wt{\phi}:\wt{\ga} \to \Phi_{\la}(\ga)$. Let $p \in I(\la)$, and let $1 \leq n \leq D_{p}(p)$. We set
	\[ T_{p,n}:=\Phi_{\la}(\set{x \in \ga}{D_{x}(p)\geq n}). \]
Then, $T_{p,n}$ is a subtree of $\Phi_{\la}(\ga)$ or a singleton $\{\Phi_{\la}(p)\}$. We define $\wt{\ga}$ by attaching $T_{p,n}$ to $\ga$ in such a way that the point $\Phi_{\la}(p) \in T_{p,n}$ gets identified with the point $p \in \ga$, for all $p \in I(\la)$ and for all integer $1 \leq n \leq D_{p}(p)$. Let $\wt{\phi}:\wt{\ga} \to \Phi_{\la}(\ga)$ be a map defined by $\Phi_{\la}$ on $\ga$, and the inclusion on $T_{p,n}$.

\begin{prop}
$\wt{\phi}:\wt{\ga} \to \Phi_{\la}(\ga)$ is a finite harmonic morphism of degree $d$.
\end{prop}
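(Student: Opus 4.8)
The plan is to verify the three defining properties of a finite harmonic morphism and then compute the degree. That $\wt\phi$ is a morphism is immediate: on $\ga$ it is $\Phi_{\la}$, which is already a finite morphism, and on each grafted tree $T_{p,n}$ it is the inclusion into $\Phi_{\la}(\ga)$, an isometric embedding with slope one; the two descriptions agree at the grafting point $\Phi_{\la}(p)=p$, so $\wt\phi$ is continuous and piecewise integral affine. Finiteness is equally quick, using that $I(\la)$ is finite (Lemma \ref{ind}): a fiber $\wt\phi^{-1}(x')$ is the finite set $\Phi_{\la}^{-1}(x')$ together with at most one point from each of the finitely many trees $T_{p,n}$.

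The substance is harmonicity, which I would check separately at (i) points of $\ga\setminus I(\la)$, (ii) indeterminacy points $p$, and (iii) points lying on a grafted tree. Case (i) is Lemma \ref{har}: for $x\notin I(\la)$ no tree is attached at $x$, and for any direction $h'$ the directional sum equals $D_x(x)-D_y(x)=D_x(x)$, since $D_y(x)=0$ for every $y$ away from $x$ (otherwise $x$ would be an indeterminacy point, by Lemma \ref{zero} and effectivity); this is independent of $h'$.

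The enabling observation for (ii) and (iii), and the step I expect to be the main obstacle, is that the function $x'\mapsto D_{x'}(p)$ is constant on each connected component (\emph{branch}) of $\Phi_{\la}(\ga)\setminus\{\Phi_{\la}(p)\}$. I would deduce this from Lemma \ref{base}: taking $f\in L_{\la}(D_p)^{\ast}$ with $D_y=D_p+\div(f)$, the function $f$ attains its maximum at $p$, so for any $z$ with $\Phi_{\la}(z)$ on the segment $[\Phi_{\la}(p),\Phi_{\la}(y)]$ and $\Phi_{\la}(z)\neq\Phi_{\la}(p)$ one has $f(z)<f(p)$, whence $\ord_p(\ratg{f}{z})=\ord_p(f)$ and therefore $D_z(p)=D_p(p)+\ord_p(f)=D_y(p)$. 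Since any two points of a single branch are joined to $\Phi_{\la}(p)$ by paths sharing an initial segment, $D_{x'}(p)$ is forced to a constant value $m(h')$ on the whole branch in a direction $h'$. Consequently $T_{p,n}=\{x':D_{x'}(p)\geq n\}$ is a \emph{full} subtree: it is the union of $\{\Phi_{\la}(p)\}$ with exactly those branches for which $m(h')\geq n$, each included entirely up to the leaf-ends of $\Phi_{\la}(\ga)$. This is what rules out the dangerous configuration of a branch point or interior leaf of $T_{p,n}$ where only some directions are present.

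Fullness settles case (iii): at any point $q\neq\Phi_{\la}(p)$ of $T_{p,n}$ the inclusion has slope one and every direction of $q$ in $\Phi_{\la}(\ga)$ is represented, so each directional sum equals $1$, while leaf-ends of $T_{p,n}$ are leaf-ends of $\Phi_{\la}(\ga)$, where harmonicity is vacuous. For case (ii), at $\Phi_{\la}(p)$ in a direction $h'$ the half-edges coming from $\ga$ contribute $D_p(p)-m(h')$ by Lemma \ref{har}, while the grafted trees contribute one slope-one half-edge in direction $h'$ for each $n\leq m(h')$, that is $m(h')$ in total; the two add up to $D_p(p)$, independent of $h'$, so $\wt\phi$ is harmonic at $p$ with $\deg_p(\wt\phi)=D_p(p)$. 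Finally, for the degree I would evaluate the fiber over a generic $x'$ avoiding the finitely many points $\Phi_{\la}(p)$, where the weighted count is $\sum_{\Phi_{\la}(x)=x'}D_{x'}(x)+\sum_{p\in I(\la)}D_{x'}(p)$; since every point of $\supp(D_{x'})$ not lying over $x'$ is an indeterminacy point (again by Lemma \ref{zero}), this equals $\deg(D_{x'})=d$, so $\deg(\wt\phi)=d$.
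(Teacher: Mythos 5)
Your proof is correct, and its global skeleton is the same as the paper's: harmonicity is checked on $\ga\setminus I(\la)$ via Lemma \ref{har}, at indeterminacy points by adding the grafted contribution $D_{y}(p)$ (your $m(h')$) to the contribution $D_{p}(p)-D_{y}(p)$ coming from $\ga$, and the degree is computed as a weighted fiber count over a point outside $\wt{\phi}(I(\la))$, using that any point of $\supp(D_{x})$ not lying over $\wt{\phi}(x)$ belongs to $I(\la)$. The genuine difference is your treatment of the points of $\wt{\ga}\setminus\ga$: the paper dismisses harmonicity there with ``Obviously,'' whereas you identify it as the step with real content and prove it. Indeed, the inclusion of a subtree of $\Phi_{\la}(\ga)$ is \emph{not} automatically harmonic --- it fails at any leaf-end of the subtree that is an interior point of $\Phi_{\la}(\ga)$, and at any point of the subtree where some direction of the ambient tree is missing --- so one needs to know that each $T_{p,n}$ is a union of $\{\Phi_{\la}(p)\}$ and entire branches. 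Your branch-constancy claim, that $x'\mapsto D_{x'}(p)$ is constant on each component of $\Phi_{\la}(\ga)\setminus\{\Phi_{\la}(p)\}$, is deduced correctly from Lemma \ref{base}: since $f$ attains its maximum at $p$, any $z$ with $\Phi_{\la}(z)\neq\Phi_{\la}(p)$ on the segment satisfies $f(z)<f(p)$, so $\ord_{p}(\ratg{f}{z})=\ord_{p}(f)$ and $D_{z}(p)=D_{y}(p)$; this gives exactly the required fullness, and as a byproduct makes the count of grafted half-edges at $p$ in a direction $h'$ transparent ($m(h')=D_{y}(p)$ of them, each of slope one). What the paper's version buys is brevity; what yours buys is an actual justification of the paper's ``obvious'' step, together with the small but also omitted verification that $D_{y}(x)=0$ for $x\notin I(\la)$, which is what makes Lemma \ref{har} yield harmonicity away from the indeterminacy points.
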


\begin{proof}
Obviously, $\wt{\phi}$ is harmonic at $x \in \wt{\ga} \setminus \ga$. By Lemma \ref{har}, we only have to show that $\wt{\phi}$ is harmonic at indeterminacy points to prove harmonicity of $\wt{\phi}$. Let $p \in I(\la)$, let $h'$ be a half-edge of $\wt{\phi}(p)$, and let $y \in \ga$ such that $\wt{\phi}(p) \neq \wt{\phi}(y)$ and $\wt{\phi}(y)$ is contained in $h'$. By the construction of $\wt{\ga}$, the number of points $y' \in \wt{\ga} \setminus \ga$ such that $\wt{\phi}(y')=\wt{\phi}(y)$ is just $D_{y}(p)$. For each half-edge $h$ of $p$ contains such $y'$, we have $\deg_{h}(\wt{\phi})=1$. Therefore, we have
	\[ \sum_{\substack{p \in h \\ h \mapsto h'}}\deg_{h}(\wt{\phi})=D_{p}(p)-D_{y}(p)+D_{y}(p)=D_{p}(p). \]

Let us show $\deg(\wt{\phi})=d$. Let $x \in \ga$ such that $\wt{\phi}(x) \notin \wt{\phi}(I(\la))$. For $x' \in \ga$, if $\wt{\phi}(x')=\wt{\phi}(x)$, then $x' \notin I(\la)$ and thus, $\deg_{x'}(\wt{\phi})=D_{x'}(x')=D_{x}(x')$. For $x' \in \wt{\ga} \setminus \ga$, if $\wt{\phi}(x')=\wt{\phi}(x)$, then there exist the unique $p \in I(\la)$ of $D_{x}(p) \geq 1$ and the unique integer $1 \leq n \leq D_{x}(p)$ such that $x' \in T_{p,n}$. Conversely, for each $p \in I(\la)$ of $D_{x}(p) \geq 1$ and for each $1 \leq n \leq D_{x}(p)$, there exists the unique point $x' \in T_{p,n}$ such that $\wt{\phi}(x')=\wt{\phi}(x)$. For such $x' \in T_{p,n}$, we have $\deg_{x'}(\wt{\phi})=1$. Therefore, we have
\begin{align*}
	\deg(\wt{\phi})&=\sum_{\substack{x' \in \wt{\ga} \\ \wt{\phi}(x')=\wt{\phi}(x)}}\deg_{x'}(\wt{\phi}) \\
	&=\sum_{\substack{x' \in \ga \\ \wt{\phi}(x')=\wt{\phi}(x)}}\deg_{x'}(\wt{\phi})+\sum_{\substack{x' \in \wt{\ga} \setminus \ga \\ \wt{\phi}(x')=\wt{\phi}(x)}}\deg_{x'}(\wt{\phi}) \\
	&=\sum_{\substack{x' \in \ga \\ \wt{\phi}(x')=\wt{\phi}(x)}}D_{x}(x')+\sum_{\substack{p \in I(\la) \\ D_{x}(p) \geq 1}}D_{x}(p).
\end{align*}
For $y \in \ga$, if $y \in \supp(D_{x})$, then either $D_{y}=D_{x}$ or $y \in I(\la)$. Thus, we have $\deg(\wt{\phi})=\deg(D_{x})=d$.
\end{proof}

\begin{cor} \label{mthm}
Let $\ga$ be a tropical curve, and let $d$ be a positive integer. Then, the followings are equivalent:
\begin{enumerate}
	\item[$(1)$] $\ga$ is stably $d$-gonal.
	\item[$(2)$] $\ga$ has a liner system satisfying $(\ast)$.
\end{enumerate}
\end{cor}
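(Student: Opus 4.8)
The plan is to prove the two implications separately, drawing on the constructions already assembled in this section. For $(2)\Rightarrow(1)$, almost all of the work is already done. Assuming $\la$ satisfies $(\ast)$, the first lemma of this section shows that $\Phi_{\la}(\ga)$ is a tree, so we may take it as the target $T$. The set $I(\la)$ is finite by Lemma \ref{ind}, and each $T_{p,n}$ is a subtree of the tree $\Phi_{\la}(\ga)$; hence the curve $\wt{\ga}$ obtained by grafting the $T_{p,n}$ onto the indeterminacy points is a genuine tropical modification of $\ga$. The preceding Proposition then states that $\wt{\phi}:\wt{\ga}\to\Phi_{\la}(\ga)$ is a finite harmonic morphism of degree $d$ onto a tree, which is exactly the definition of $\ga$ being stably $d$-gonal. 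The only point needing a short verification is that each indeterminacy point is a finite point, so that the grafting is admissible.

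For $(1)\Rightarrow(2)$, I would start from a witness of stable $d$-gonality: a modification $\wt{\ga}$ with retraction $\pi:\wt{\ga}\to\ga$, a tree $T$, and a finite harmonic morphism $\wt{\phi}:\wt{\ga}\to T$ of degree $d$. Fix any degree-one divisor $D$ on $T$ and set $\la:=\pi_{\ast}(\wt{\phi}^{\ast}|D|)$. First I would check that $\la$ is a finitely generated linear system: compatibility of $\wt{\phi}^{\ast}$ and $\pi_{\ast}$ with linear equivalence (Proposition \ref{pu}) makes $E\mapsto\pi_{\ast}\wt{\phi}^{\ast}E$ well defined on $|D|$, and since $|D|$ is parametrized by the tree $T$ and the two operations are piecewise integral affine, $L_{\la}$ is finitely generated. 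The degree is immediate: $\deg\la=\deg\pi_{\ast}\wt{\phi}^{\ast}D=\deg\wt{\phi}^{\ast}D=\deg(\wt{\phi})\cdot\deg(D)=d$, using that $\pi$ has degree one.

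It then remains to compute $\rank\la=\geomdim\la=1$. For the geometric dimension, I would identify the polyhedral complex $P(\la)\simeq\tconv(\Phi_{\la}(\ga))$ with (a subdivision of) the tree $T$ via the parametrization $x'\mapsto\pi_{\ast}\wt{\phi}^{\ast}(x')$; since $T$ is one-dimensional and $\la$ is non-constant (the single chip of $D$ can be moved along $T$), this gives $\geomdim\la=1$. For the rank, the upper bound $\rank\la\le\geomdim\la=1$ is free from the Remark, and the lower bound $\rank\la\ge 1$ follows by a specialization argument: given $x\in\ga$, view $x\in\ga\subseteq\wt{\ga}$, put $x'=\wt{\phi}(x)$, and note $\wt{\phi}^{\ast}(x')\ge\deg_{x}(\wt{\phi})\cdot x\ge x$ because $\wt{\phi}$ is finite, so $\pi_{\ast}\wt{\phi}^{\ast}(x')\in\la$ dominates $x$.

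The main obstacle I expect is precisely the geometric-dimension claim in $(1)\Rightarrow(2)$: one must confirm that the parametrizing map $x'\mapsto\pi_{\ast}\wt{\phi}^{\ast}(x')$ neither collapses $T$ to a point nor raises the dimension above one, i.e.\ that the push-forward by $\pi$ faithfully records the one-dimensional shape of $T$. Once this identification of $P(\la)$ with $T$ is pinned down, every clause of $(\ast)$ — linear system, finite generation, degree $d$, and $\rank=\geomdim=1$ — falls into place, and together with the essentially immediate $(2)\Rightarrow(1)$ direction this completes the equivalence.
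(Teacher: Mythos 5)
Your proposal is correct and takes essentially the same route as the paper: the implication $(2)\Rightarrow(1)$ is exactly an appeal to the section's construction (the tree lemma, Lemma \ref{ind}, the grafting of the $T_{p,n}$, and the final Proposition — the paper attaches no separate proof to the corollary beyond these), and $(1)\Rightarrow(2)$ is precisely the construction $\pi_{\ast}(\wt{\phi}^{\ast}|D|)$ that the paper asserts, without supplying details, in the opening paragraph of Section 3, with your sketch (degree via $\deg(\wt{\phi}^{\ast}D)=\deg(\wt{\phi})\deg(D)$, finite generation from $L(D)$ on $T$, $\rank\geq 1$ by pulling back a point, $\geomdim=1$ from the parametrization of $\la$ by $T$) filling in the intended verifications. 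The one caveat you flag in $(2)\Rightarrow(1)$ — that indeterminacy points must be finite points for the grafting to yield a genuine tropical modification — is a real subtlety (it can fail, e.g.\ for a system of the form $(p)+|D'|$ with $p$ a base point at infinity) which the paper itself passes over in silence, so on that point your write-up is, if anything, more careful than the original.
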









\end{document}